\numberwithin{equation}{section}
\newtheorem{theorem}{Theorem}[section]
\newtheorem{corollary}{Corollary}[section]
\newtheorem{example}{Example}[section]
\theoremstyle{remark}
\newtheorem{remark}{Remark}[section]
\title[On starlike functions]
 {On starlike functions related with the convex conic domain}
\subjclass[2010]{30C45;44A20}
\keywords{Analytic functions; $p$--valent functions; Generalized
Bessel function; Gaussian hypergeometric function; Hadamard product.}
\begin{document}
\begin{abstract}
In the present paper, we study a new subclass $\mathcal{M}_p(\alpha,\beta)$ of $p$--valent functions and obtain some inequalities concerning the coefficients for the desired class.
Also, by use of the Hadamard product, we define a general operator and find a condition such that it belongs to the class $\mathcal{M}_p(\alpha,\beta)$.
\end{abstract}
\author[R. Kargar, J. Sok\'{o}{\l}]
       {R. Kargar and J. Sok\'{o}{\l}}
\address{Young Researchers and Elite Club,
Ardabil Branch, Islamic Azad University, Ardabil, Iran}
       \email {rkargar1983@gmail.com {\rm (R. Kargar)}}
\address{ University of Rzesz\'{o}w, Faculty of Mathematics and Natural
         Sciences, ul. Prof. Pigonia 1, 35-310 Rzesz\'{o}w, Poland}
      \email{jsokol@ur.edu.pl {\rm (J. Sok\'{o}{\l})}}

\maketitle
\section{Introduction}
Let $\mathcal{A}_p$ denote the class of functions $f$ of the
form:
\begin{equation*}
  f(z)=z^p+ \sum_{k=p+1}^{\infty}a_{k}z^{k}\quad p\in \mathbb{N}:=\{1,2,3,\ldots\},
\end{equation*}
which are analytic and $p$--valent in the unit disk
$\Delta=\{z\in \mathbb{C} : |z|<1\}$. Further, we write that
$\mathcal{A}_1=\mathcal{A} $. 
A function $f(z)\in\mathcal{A}_p$ is said to
be in the class $\mathcal{M}_p(\alpha,\beta)$, if it satisfies
\begin{equation}\label{eq1}
  {\rm Re}\left(\frac{zf'(z)}{f(z)}\right)< \beta \left|\frac{zf'(z)}{f(z)}-p\right|+p\alpha\quad (z\in\Delta),
\end{equation}
for some $\beta\leq 0$ and $\alpha>1$.
Note that \eqref{eq1} follows that
\begin{equation}\label{c}
    {\rm Re}\left(\frac{zf'(z)}{f(z)}\right)< p\alpha\quad (z\in\Delta),
\end{equation}
because $\beta$ is negative or $0$. Moreover, if we write
\eqref{eq1} in the following equivalent form
\begin{equation*}
    \frac{|F(z)-p|}{{\rm Re}\left\{p\alpha-F(z)\right\}}<\frac{1}{-\beta}\quad
    (z\in\Delta),
\end{equation*}
where
\begin{equation*}
  F(z):=\frac{zf'(z)}{f(z)}\quad
    (z\in\Delta),
\end{equation*}
then we see that the relation between the distance $F(z)$ from the
focus $p$ and the distance $F(z)$ from the directrix ${\rm
Re}\left\{w\right\}=p\alpha$ depends on the eccentricity $-1/\beta$.
Therefore, if $f(z)\in\mathcal{M}_p(\alpha,\beta)$, then $F(z)$,
$z\in\Delta$ lies in a convex conic domain: elliptic when
$\beta<-1$, parabolic when $\beta=-1$ and hyperbolic when
$-1<\beta<0$, or $F(\Delta)$ is the half--plane \eqref{c}, for
$\beta=0$.

The class $\mathcal{M}_p(\alpha,\beta)$ cover many subclasses
considered earlier by various authors \cite{AHRS, NO, OJ, UGS}. We remark that the class $\mathcal{M}_1(\alpha,\beta)=\mathcal{MD}(\alpha,\beta)$
was investigated earlier by J. Nishiwaki and S. Owa \cite{NO2007}.

In this work we shall be mainly concerned with functions
$f\in\mathcal{A}_p$ of the form
\begin{equation}\label{mu}
\left(\frac{z^p}{f(z)}\right)^\mu=1-\sum_{k=p}^{\infty}b_kz^k \quad
(\mu>0, z\in\Delta\cup\{1\}),
\end{equation}
where $(z^p/f(z))^\mu$ represents principal powers (i.e. the
principal branch of $(z^p/f(z))^\mu$ is chosen).

This paper is organized as follows. In Section \ref{sec2}, we present some inequalities for the class  $\mathcal{M}_p(\alpha,\beta)$. In Section \ref{sec3}, by use of the Hadamard product and applying the generalized Bessel function and the Gaussian hypergeometric function we introduce a new operator which we denote by $\mathcal{I}_{c,d}^{a,b}(p,e,\delta)(z)$. As an application, we prove that the operator $\mathcal{I}_{c,d}^{a,b}(p,e,\delta)(z)$ belongs to the class $\mathcal{M}_p(\alpha,\beta)$.

\section{Main Results}\label{sec2}
Our first result is contained in the following:
\begin{theorem}\label{t21}
Let the function $f$  be in the class $\mathcal{M}_p(\alpha,\beta)$
and let $f$ be of the form \eqref{mu} for some $b_k$ such that
\begin{equation*}
    b_k\geq0\quad for \quad k\in\{p,p+1,p+2,\ldots\} \quad and \quad
    \sum_{k=p}^{\infty}b_k<1.
\end{equation*}
Then
\begin{equation}\label{1t21}
    \sum_{k=p}^{\infty}[p\mu(\alpha-1)+k(1-\beta)]b_k\leq
    p\mu(\alpha-1),
\end{equation}
where $\beta\leq 0$, $\mu>0$ and $\alpha>1$.
\end{theorem}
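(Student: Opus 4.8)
The plan is to exploit the half-plane inclusion \eqref{c}, namely $\mathrm{Re}(zf'(z)/f(z)) < p\alpha$ for all $z \in \Delta$, together with the power-series representation \eqref{mu}. First I would take logarithmic derivatives in \eqref{mu}. Writing $g(z) = (z^p/f(z))^\mu = 1 - \sum_{k=p}^\infty b_k z^k$, differentiation gives $\mu(p - zf'(z)/f(z)) = zg'(z)/g(z)$, so that
\[
\frac{zf'(z)}{f(z)} = p - \frac{1}{\mu}\cdot\frac{zg'(z)}{g(z)} = p + \frac{1}{\mu}\cdot\frac{\sum_{k=p}^\infty k b_k z^k}{1 - \sum_{k=p}^\infty b_k z^k}.
\]
This expresses $F(z) = zf'(z)/f(z)$ explicitly in terms of the coefficients $b_k$.

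Next I would feed this into \eqref{c}. Since $\mathrm{Re}(F(z)) < p\alpha$ is equivalent to $\mathrm{Re}(F(z) - p) < p(\alpha-1)$, substituting the above yields
\[
\mathrm{Re}\!\left(\frac{1}{\mu}\cdot\frac{\sum_{k=p}^\infty k b_k z^k}{1 - \sum_{k=p}^\infty b_k z^k}\right) < p(\alpha-1) \qquad (z \in \Delta).
\]
The key move is now to let $z \to 1^-$ along the real axis. Because $b_k \ge 0$, $\sum b_k < 1$, and the representation \eqref{mu} is valid on $\Delta \cup \{1\}$, both series converge at $z = 1$ and all terms are nonnegative, so the expression on the left is real and its limit as $z \to 1^-$ is simply $\frac{1}{\mu}\cdot\frac{\sum_{k=p}^\infty k b_k}{1 - \sum_{k=p}^\infty b_k}$. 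Passing to the limit in the inequality gives
\[
\frac{1}{\mu}\cdot\frac{\sum_{k=p}^\infty k b_k}{1 - \sum_{k=p}^\infty b_k} \le p(\alpha-1).
\]
Clearing the positive denominator $1 - \sum b_k$ and multiplying by $\mu > 0$ produces $\sum_{k=p}^\infty k b_k \le p\mu(\alpha-1)\bigl(1 - \sum_{k=p}^\infty b_k\bigr)$, which rearranges at once to $\sum_{k=p}^\infty [k(1-\beta) + p\mu(\alpha-1)] b_k \le p\mu(\alpha-1)$ after one further observation — see below — giving exactly \eqref{1t21}.

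The one subtlety, and the step I expect to be the main obstacle, is that the naive argument above only uses \eqref{c} and hence only yields $\sum_{k=p}^\infty [k + p\mu(\alpha-1)] b_k \le p\mu(\alpha-1)$, i.e. the case $\beta = 0$; to recover the sharper factor $k(1-\beta)$ one must use the full inequality \eqref{eq1} rather than its weakened consequence \eqref{c}. To do this I would, with $z = r \in (0,1)$ real, estimate $\bigl|F(r) - p\bigr|$ from below by its real part (valid since $F(r) - p \ge 0$ is real and nonnegative along $(0,1)$), so that \eqref{eq1} gives $F(r) - p \le \beta\,(F(r)-p) + p(\alpha-1)$, that is $(1-\beta)(F(r)-p) \le p(\alpha-1)$; then the same limit $r \to 1^-$ combined with the coefficient formula for $F(r)-p$ yields the claimed inequality with the factor $(1-\beta)$. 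Care is needed to justify the inequality $|w| \ge \mathrm{Re}(w)$ is being applied in the right direction — here it works in our favour because $\beta \le 0$ makes $\beta|w| \le \beta\,\mathrm{Re}(w)$ — and to confirm that term-by-term passage to the limit at $z=1$ is legitimate, which is guaranteed by the hypothesis that \eqref{mu} holds on $\Delta \cup \{1\}$ and by nonnegativity of all $b_k$ via monotone convergence.
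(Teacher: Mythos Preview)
Your proposal is correct and follows essentially the same route as the paper: express $zf'(z)/f(z)$ via the logarithmic derivative of $(z^p/f(z))^\mu$, insert this into the defining inequality \eqref{eq1}, restrict to real $z\in(0,1)$ so that $F(r)-p$ is real and nonnegative (whence the modulus drops out), and let $r\to1^-$ before clearing the denominator. The only difference is expository---you first try the half-plane consequence \eqref{c} and then correct course, whereas the paper works directly with \eqref{eq1}---but the substance is identical.
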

\begin{proof}
Let $f\in\mathcal{M}_p(\alpha,\beta)$. A simple calculation gives us
\begin{equation}\label{e2}
z\frac{{\rm d}}{{\rm
d}z}\left(\frac{z^p}{f(z)}\right)^\mu=\mu\left[p\left(\frac{z^p}{f(z)}\right)^\mu
-\left(\frac{z^p}{f(z)}\right)^{\mu+1}\frac{f'(z)}{z^{p-1}}\right].
\end{equation}
Thus, by use of the above relation \eqref{e2}, we have
\begin{equation*}
  {\rm Re}\left(\frac{zf'(z)}{f(z)}\right)< \beta
\left|\frac{zf'(z)}{f(z)}-p\right|+p\alpha
\end{equation*}
 if and only if
\begin{equation*}
  {\rm Re}\left(p+\frac{-\frac{z}{\mu} \frac{{\rm d}}{{\rm d}z}\left(z^p/f(z)\right)^\mu}{\left(z^p/f(z)\right )^\mu}\right)
< \beta\left|\frac{-\frac{z}{\mu} \frac{{\rm d}}{{\rm
d}z}\left(z^p/f(z)\right)^\mu}{\left(z^p/f(z)\right
)^\mu}\right|+p\alpha.
\end{equation*}
 Since $f$ is of the form \eqref{mu}, the last
inequality may be equivalently written as
\begin{equation}\label{e33}
{\rm
Re}\left(p+\frac{1}{\mu}\frac{\sum_{k=p}^{\infty}kb_kz^k}{1-\sum_{k=p}^{\infty}b_kz^k}\right)
<
\frac{\beta}{\mu}\left|\frac{\sum_{k=p}^{\infty}kb_kz^k}{1-\sum_{k=p}^{\infty}b_kz^k}\right|+p\alpha.
\end{equation}
Now suppose that $z\in\Delta$ is real and tends to $1^-$ through
reals, then from the last inequality \eqref{e33}, we get
\begin{equation*}
\mu p+\frac{\sum_{k=p}^{\infty}kb_k}{1-\sum_{k=p}^{\infty}b_k}\leq
\beta
\left|\frac{\sum_{k=p}^{\infty}kb_k}{1-\sum_{k=p}^{\infty}b_k}\right|+p\mu\alpha
\end{equation*}
or equivalently
\begin{equation*}
\mu p+\frac{\sum_{k=p}^{\infty}kb_k}{1-\sum_{k=p}^{\infty}b_k}\leq
\beta\frac{\sum_{k=p}^{\infty}kb_k}{1-\sum_{k=p}^{\infty}b_k}+p\mu\alpha,
\end{equation*}
which gives \eqref{1t21}.
\end{proof}

\begin{remark}
Taking $p=\mu=1$ in the above Theorem \ref{t21}, we get the result that has been obtained recently by
Aghalary et al. \cite[Theorem 2.1 with $n=1$]{AEO}.
\end{remark}

Next we derive the following:
\begin{theorem}\label{t22}
Let $f\in\mathcal{A}_p$ be of the form \eqref{mu} with $\mu>0$.
If
\begin{equation}\label{3t22}
\sum_{k=p}^{\infty}[p\mu(\alpha-1)+k(1-\beta)]| b_k|< p\mu(\alpha-1),
\end{equation}
then $f$ is in the class $\mathcal{M}_p(\alpha,\beta)$, where $\beta\leq 0$ and $\alpha>1$.
\end{theorem}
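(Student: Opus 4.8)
The plan is to prove the contrapositive-style converse of Theorem~\ref{t21}: assuming the coefficient inequality \eqref{3t22}, I want to show that $f$ satisfies the defining inequality \eqref{eq1} for every $z\in\Delta$. The natural route is to use the equivalent reformulation \eqref{e33} obtained in the proof of Theorem~\ref{t21}: with $F(z)=zf'(z)/f(z)$ and the representation \eqref{mu} in force, showing $f\in\mathcal{M}_p(\alpha,\beta)$ is the same as proving, for all $z\in\Delta$,
\begin{equation*}
{\rm Re}\!\left(p+\frac{1}{\mu}\frac{\sum_{k=p}^{\infty}kb_kz^k}{1-\sum_{k=p}^{\infty}b_kz^k}\right)
<\frac{\beta}{\mu}\left|\frac{\sum_{k=p}^{\infty}kb_kz^k}{1-\sum_{k=p}^{\infty}b_kz^k}\right|+p\alpha .
\end{equation*}
Equivalently, clearing the positive denominator $\big|1-\sum b_kz^k\big|$ and writing things as a single inequality, it suffices to show
\begin{equation*}
{\rm Re}\!\left(\frac{1}{\mu}\sum_{k=p}^{\infty}kb_kz^k+(p-p\alpha)\Big(1-\sum_{k=p}^{\infty}b_kz^k\Big)\right)
<-\frac{\beta}{\mu}\left|\sum_{k=p}^{\infty}kb_kz^k\right|
\end{equation*}
when $|z|<1$; here I am using $-\beta\ge 0$ and the fact that $\mathrm{Re}(w/v)\le \mathrm{Re}(w)\cdot(\text{something})$ must be handled with care, so in practice I would multiply through by $\mu\,\big|1-\sum b_kz^k\big|>0$ and compare real parts directly.

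The key steps, in order, are: (i) rewrite the target inequality after multiplying by $\mu|1-\sum b_kz^k|$ so that no denominators remain; (ii) bound the left-hand side from above using ${\rm Re}(w)\le |w|$ together with the triangle inequality, replacing each $z^k$ by $|z|^k<1$, which turns the left side into $\sum_{k=p}^{\infty}\big(\tfrac{k}{\mu}+p(\alpha-1)\big)|b_k|\,|z|^k$ minus the leading $p\mu(\alpha-1)$-type term; (iii) bound the right-hand side below: since $-\beta\ge0$ the term $-\tfrac{\beta}{\mu}\big|\sum kb_kz^k\big|$ is nonnegative, so it can simply be dropped (estimated below by $0$) if $\beta<0$, or is $0$ when $\beta=0$ — wait, that weakens things in the wrong direction, so instead I keep the directrix term $p\alpha\cdot\big|1-\sum b_kz^k\big|\ge p\alpha\big(1-\sum|b_k||z|^k\big)$ on the right and combine; (iv) collect terms: after these estimates the desired inequality reduces to $\sum_{k=p}^{\infty}\big[p\mu(\alpha-1)+k(1-\beta)\big]|b_k|\,|z|^k<p\mu(\alpha-1)$, which follows from the hypothesis \eqref{3t22} because $|z|^k<1$.

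The main obstacle is the triangle-inequality bookkeeping around the modulus term $\beta\big|\sum kb_kz^k\big|/\mu$: because $\beta\le0$, this term helps us (it adds to the right-hand side), but to exploit it cleanly I must be careful not to lose it when I pass from $z$ to $|z|$, and I must make sure the estimate $\mathrm{Re}(p\alpha(1-\sum b_kz^k))\ge p\alpha(1-\sum|b_k||z|^k)$ and the estimate on $\mathrm{Re}\big(\tfrac1\mu\sum kb_kz^k\big)\le \tfrac1\mu\sum k|b_k||z|^k$ are combined with matching signs. A secondary point to check is that $1-\sum b_kz^k\ne0$ on $\Delta$ so that $F(z)$ is well-defined and \eqref{e33} is genuinely equivalent to \eqref{eq1}; this is guaranteed once $\sum|b_k|<1$, which is itself implied by \eqref{3t22} since $p\mu(\alpha-1)+k(1-\beta)\ge p\mu(\alpha-1)+p>p\mu(\alpha-1)$ forces $\sum|b_k|<1$. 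With these sign issues resolved, the rest is the routine collection of terms described above, and letting the coefficient-wise inequality at $|z|^k$ be dominated by its value at $|z|=1$, namely \eqref{3t22}, closes the argument.
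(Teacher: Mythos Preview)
Your overall strategy is exactly the paper's: pass to the equivalent form \eqref{e33}, note that \eqref{3t22} forces $\sum_{k\ge p}|b_k|<1$ (so the denominator never vanishes), and then dominate both the real part and the modulus term by coefficient sums so that everything collapses to \eqref{3t22}. Your step (iv) and the final reduction are precisely what the paper obtains.

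The one place where your write-up wobbles is the ``clearing the denominator'' display. Multiplying
\[
{\rm Re}\!\left(\frac{A}{\mu B}\right)+(p-p\alpha)<\frac{\beta}{\mu}\,\frac{|A|}{|B|},
\qquad A=\sum_{k\ge p}kb_kz^k,\ B=1-\sum_{k\ge p}b_kz^k,
\]
by $|B|$ does \emph{not} yield ${\rm Re}\bigl(\tfrac{1}{\mu}A+(p-p\alpha)B\bigr)$ on the left, since ${\rm Re}(A/B)\cdot|B|\ne{\rm Re}(A)$ in general and $(p-p\alpha)|B|\ne(p-p\alpha)\,{\rm Re}(B)$. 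You noticed this yourself (``must be handled with care''), but then the subsequent steps (ii)--(iii) are phrased as if the cleared inequality were the target, which makes the bookkeeping murky.

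The paper sidesteps this entirely by never clearing denominators. Writing $w=A/B$, it simply uses
\[
{\rm Re}(w)\le |w|\le \frac{\sum_{k\ge p}k|b_k|}{1-\sum_{k\ge p}|b_k|}
\quad\text{and}\quad
-\beta\,|w|\le -\beta\,\frac{\sum_{k\ge p}k|b_k|}{1-\sum_{k\ge p}|b_k|},
\]
so that ${\rm Re}(w)-\beta|w|\le(1-\beta)\dfrac{\sum k|b_k|}{1-\sum|b_k|}$, and the right-hand side is $<p\mu(\alpha-1)$ exactly by rewriting \eqref{3t22}. If you replace your cleared-denominator detour by this two-line estimate on the quotient, your argument becomes identical to the paper's.
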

\begin{proof}
In the proof of Theorem \ref{t21}, we saw the following inequality
\begin{equation*}
  {\rm Re}\left(\frac{zf'(z)}{f(z)}\right)-p\alpha< \beta
\left|\frac{zf'(z)}{f(z)}-p\right|,
\end{equation*}
 is equivalent to
\begin{equation*}\label{5t22}
  {\rm Re}\left(\frac{\sum_{k=p}^{\infty}kb_kz^k}{1-\sum_{k=p}^{\infty}b_kz^k}\right)-p\mu(\alpha-1)< \beta\left|
 \frac{\sum_{k=p}^{\infty}kb_kz^k}{1-\sum_{k=p}^{\infty}b_kz^k}\right|.
\end{equation*}
Thus, to show that $f$ is in the class $\mathcal{M}_p(\alpha,\beta)$, it
suffices to prove that
\begin{equation}\label{6t22}
  {\rm Re}\left(\frac{\sum_{k=p}^{\infty}kb_kz^k}{1-\sum_{k=p}^{\infty}b_kz^k}\right)
  -\beta\left|
 \frac{\sum_{k=p}^{\infty}kb_kz^k}{1-\sum_{k=p}^{\infty}b_kz^k}\right|
 <p\mu(\alpha-1).
\end{equation}
Note that from  \eqref{3t22}, we obtain that
\begin{equation*}
 1-\sum_{k=p}^{\infty}|b_k|>0.
\end{equation*}
Therefore one can rewrite \eqref{3t22} in the following equivalent form
\begin{equation}\label{7t22}
    \frac{\sum_{k=p}^{\infty}k|b_k|}{1-\sum_{k=p}^{\infty}|b_k|}
    -\beta\frac{\sum_{k=p}^{\infty}k|b_k|}{1-\sum_{k=p}^{\infty}|b_k|}
    <p\mu(\alpha-1).
\end{equation}
Because $\beta\leq 0$, we have
\begin{equation}\label{8t22}
    {\rm Re}\left(\frac{\sum_{k=p}^{\infty}kb_kz^k}{1-\sum_{k=p}^{\infty}b_kz^k}\right)
    -\beta\left|
    \frac{\sum_{k=p}^{\infty}kb_kz^k}{1-\sum_{k=p}^{\infty}b_kz^k}\right|
    \leq\frac{\sum_{k=p}^{\infty}k|b_k|}{1-\sum_{k=p}^{\infty}|b_k|}
    -\beta\frac{\sum_{k=p}^{\infty}k|b_k|}{1-\sum_{k=p}^{\infty}|b_k|}.
\end{equation}
 Then \eqref{7t22} and \eqref{8t22} immediately follow
\eqref{6t22} and therefore, $f\in\mathcal{M}_p(\alpha,\beta)$.
\end{proof}
\begin{corollary}
Assume that $f\in \mathcal{A}$ and $(z/f(z))^\mu
=1-\sum_{k=1}^{\infty} b_k z^k$ with $\mu>0$. If the function $f$
satisfies the condition
\begin{equation*}
\sum_{k=1}^{\infty}[\mu(\alpha-1)+k(1-\beta)]| b_k|<
\mu(\alpha-1),
\end{equation*}
then $f$ is in the class $\mathcal{MD}(\alpha,\beta)$, where $\beta\leq 0$ and $\alpha>1$.
\end{corollary}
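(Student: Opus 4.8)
The plan is to obtain this simply as the $p=1$ specialization of Theorem \ref{t22}. First I would record the conventions that make the specialization legitimate: since $p=1\in\mathbb{N}$, Theorem \ref{t22} applies with this value of $p$; by the convention fixed in the Introduction, $\mathcal{A}_1=\mathcal{A}$; and for $p=1$ the representation \eqref{mu} reads $(z/f(z))^\mu=1-\sum_{k=1}^{\infty}b_kz^k$, which is exactly the hypothesis imposed on $f$ in the Corollary.

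Next I would match the numerical hypotheses. Setting $p=1$ in the coefficient condition \eqref{3t22} turns $\sum_{k=p}^{\infty}[p\mu(\alpha-1)+k(1-\beta)]|b_k|<p\mu(\alpha-1)$ into $\sum_{k=1}^{\infty}[\mu(\alpha-1)+k(1-\beta)]|b_k|<\mu(\alpha-1)$, which is precisely the assumed inequality of the Corollary. Since the side constraints $\beta\le 0$, $\mu>0$ and $\alpha>1$ are the same in both statements, all hypotheses of Theorem \ref{t22} are met.

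Then I would invoke Theorem \ref{t22} to conclude $f\in\mathcal{M}_1(\alpha,\beta)$, and finally appeal to the identification $\mathcal{M}_1(\alpha,\beta)=\mathcal{MD}(\alpha,\beta)$ recorded in the Introduction to restate this as $f\in\mathcal{MD}(\alpha,\beta)$, which is the desired conclusion. There is essentially no obstacle here: the only points requiring (trivial) verification are that $p=1$ is an admissible parameter and that the class equality $\mathcal{M}_1=\mathcal{MD}$ is available, both of which are already in place. Accordingly, the write-up can be kept to a couple of lines.
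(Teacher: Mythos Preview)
Your proposal is correct and mirrors exactly what the paper does: the corollary is stated immediately after Theorem~\ref{t22} with no separate proof, since it is just the $p=1$ case together with the identifications $\mathcal{A}_1=\mathcal{A}$ and $\mathcal{M}_1(\alpha,\beta)=\mathcal{MD}(\alpha,\beta)$.
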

\begin{remark}
The  case $p=\mu=1$ in Theorem \ref{t22} has been obtained recently
by Aghalary et al. \cite[Theorem 2.2]{AEO}.
\end{remark}
\begin{theorem}\label{t23}
A function $f$ of the form $f(z)=z^p+
\sum_{k=p+1}^{\infty}a_{k}z^{k}$ is in the class
$\mathcal{M}_p(\alpha,\beta)$, if
\begin{equation}\label{1t23}
\sum_{k=p+1}^{\infty}[p\alpha+\beta+k(1-\beta)]| a_k| < p(\alpha-1).
\end{equation}
\end{theorem}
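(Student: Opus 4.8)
The plan is to argue directly from the definition \eqref{eq1}, showing that the coefficient bound \eqref{1t23} keeps $w:=zf'(z)/f(z)$ inside the conic region for every $z\in\Delta$. First I would restate membership in $\mathcal{M}_p(\alpha,\beta)$: subtracting $p\alpha$ from both sides of \eqref{eq1} and using ${\rm Re}(w)={\rm Re}(w-p)+p$ turns it into
\begin{equation*}
{\rm Re}(w-p)-\beta|w-p|<p(\alpha-1)\qquad(z\in\Delta).
\end{equation*}
Since $\beta\le0$ we have ${\rm Re}(w-p)\le|w-p|$ and $-\beta|w-p|\ge0$, hence ${\rm Re}(w-p)-\beta|w-p|\le(1-\beta)|w-p|$; because $1-\beta\ge1>0$ it therefore suffices to establish the single estimate
\begin{equation*}
|w-p|=\left|\frac{zf'(z)}{f(z)}-p\right|<\frac{p(\alpha-1)}{1-\beta}\qquad(z\in\Delta).
\end{equation*}

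Next I would expand in power series. Writing
\begin{equation*}
w-p=\frac{zf'(z)-pf(z)}{f(z)}=\frac{\sum_{k=p+1}^{\infty}(k-p)a_kz^{k}}{z^{p}+\sum_{k=p+1}^{\infty}a_kz^{k}},
\end{equation*}
the triangle inequality gives, for $0<|z|=r<1$,
\begin{equation*}
|w-p|\le\frac{\sum_{k=p+1}^{\infty}(k-p)|a_k|r^{\,k-p}}{1-\sum_{k=p+1}^{\infty}|a_k|r^{\,k-p}}\le\frac{\sum_{k=p+1}^{\infty}(k-p)|a_k|}{1-\sum_{k=p+1}^{\infty}|a_k|},
\end{equation*}
the last inequality because $r^{k-p}<1$ makes the numerator smaller and the denominator larger (the case $z=0$ being trivial). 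For this to be legitimate I first need $\sum_{k=p+1}^{\infty}|a_k|<1$; this is a consequence of \eqref{1t23} once one checks that $p\alpha+\beta+k(1-\beta)\ge p(\alpha-1)$ for all $k\ge p+1$, equivalently $k+p+\beta(1-k)\ge0$, which holds since $\beta\le0$ and $1-k<0$.

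It then remains to deduce from \eqref{1t23} that $\sum_{k=p+1}^{\infty}(k-p)|a_k|\big/\bigl(1-\sum_{k=p+1}^{\infty}|a_k|\bigr)<p(\alpha-1)/(1-\beta)$, which, clearing the positive denominators, is exactly
\begin{equation*}
\sum_{k=p+1}^{\infty}\bigl[(1-\beta)(k-p)+p(\alpha-1)\bigr]|a_k|<p(\alpha-1).
\end{equation*}
Here lies the only delicate point: the coefficient $(1-\beta)(k-p)+p(\alpha-1)$ produced by the argument is not literally the one in \eqref{1t23}. A short computation, however, gives
\begin{equation*}
(1-\beta)(k-p)+p(\alpha-1)=\bigl[p\alpha+\beta+k(1-\beta)\bigr]-\bigl[\beta(1-p)+2p\bigr],
\end{equation*}
and $\beta(1-p)+2p\ge2p>0$ for every $p\in\mathbb{N}$ (as $\beta\le0$ and $1-p\le0$), so the new coefficient is dominated term-by-term by the one in \eqref{1t23}. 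Hence \eqref{1t23} implies the displayed inequality, and the chain closes. The main obstacle is therefore not any hard estimate but recognizing this slack between the stated hypothesis and the sharp coefficient; apart from that, the argument is the familiar ``triangle inequality plus $r\to1^{-}$'' scheme already used for Theorems \ref{t21} and \ref{t22}.
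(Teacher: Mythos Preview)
Your argument is correct. Both you and the paper use the same ``triangle inequality'' scheme, but you organize it differently: you first collapse the two pieces via ${\rm Re}(w-p)-\beta|w-p|\le(1-\beta)|w-p|$ and then bound only $|w-p|$, whereas the paper keeps the two pieces separate, bounding ${\rm Re}(w)\le|w|\le\dfrac{p+\sum k|a_k|}{1-\sum|a_k|}$ and $|w-p|\le\dfrac{\sum(k-1)|a_k|}{1-\sum|a_k|}$ independently (the paper uses $k-1$ rather than the sharper $k-p$ here, which is still a valid upper bound since $k-p\le k-1$). Clearing denominators in the paper's route lands \emph{exactly} on the hypothesis \eqref{1t23}, so no further comparison of coefficients is needed. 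Your route produces the smaller natural coefficient $(1-\beta)(k-p)+p(\alpha-1)$ and hence actually proves a slightly stronger sufficient condition; you then correctly check that it is dominated by $p\alpha+\beta+k(1-\beta)$ via the identity with slack $\beta(1-p)+2p>0$, which closes the argument. The trade-off: the paper's bookkeeping is a bit simpler, while your version makes transparent that the stated hypothesis is not sharp for $p\ge1$.
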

\begin{proof}
Assume that $f\in \mathcal{A}_p$. Then by use of the definition of $\mathcal{M}_p(\alpha,\beta)$, we have
\begin{equation*}
  {\rm Re}\left(\frac{zf'(z)}{f(z)}\right)-p\alpha< \beta
\left|\frac{zf'(z)}{f(z)}-p\right|
\end{equation*}
 if and only if
\begin{equation*}
  {\rm Re}\left(\frac{p+\sum_{k=p+1}^{\infty}ka_kz^{k-p}}
  {1+\sum_{k=p+1}^{\infty}a_kz^{k-p}}\right)-p\alpha< \beta
  \left|\frac{\sum_{k=p+1}^{\infty}(k-1)a_kz^{k-p}}
  {1+\sum_{k=p+1}^{\infty}a_kz^{k-p}}\right|.
\end{equation*}
Thus, it suffices to show that
\begin{equation}\label{2t23}
  {\rm Re}\left(\frac{p+\sum_{k=p+1}^{\infty}ka_kz^{k-p}}
  {1+\sum_{k=p+1}^{\infty}a_kz^{k-p}}\right)- \beta
  \left|\frac{\sum_{k=p+1}^{\infty}(k-1)a_kz^{k-p}}
  {1+\sum_{k=p+1}^{\infty}a_kz^{k-p}}\right|<p\alpha.
\end{equation}
Note that from \eqref{1t23}, we have
\begin{equation*}
  1-\sum_{k=p+1}^{\infty}|a_k|>0,
\end{equation*}
thus from \eqref{1t23} we obtain that
\begin{equation}\label{3t23}
  \frac{p+\sum_{k=p+1}^{\infty}k|a_k|}{1-\sum_{k=p+1}^{\infty}|a_k|}
  -\beta\frac{\sum_{k=p+1}^{\infty}(k-1)|a_k|}{1-\sum_{k=p+1}^{\infty}|a_k|}<p\alpha.
\end{equation}
Moreover, we have
\begin{equation}\label{4t23}
    {\rm Re}\left(\frac{p+\sum_{k=p}^{\infty}ka_kz^{k-p}}
    {1+\sum_{k=p}^{\infty}a_kz^{k-p}}\right)- \beta
  \left|\frac{\sum_{k=p}^{\infty}(k-1)a_kz^{k-p}}{1+\sum_{k=p}^{\infty}a_kz^{k-p}}\right|
\end{equation}
\begin{equation*}
  \leq\frac{p+\sum_{k=p+1}^{\infty}k|a_k|}
  {1-\sum_{k=p+1}^{\infty}|a_k|}-\beta\frac{\sum_{k=p+1}^{\infty}(k-1)|a_k|}
  {1-\sum_{k=p+1}^{\infty}|a_k|}.
\end{equation*}
Therefore, \eqref{3t23} and \eqref{4t23} follow \eqref{2t23}. This
completes the proof.
\end{proof}
Putting $p=1$ in Theorem \ref{t23} we have:
\begin{corollary}\label{c22}
If $f\in \mathcal{A}$ satisfies
\begin{equation*}
\sum_{k=2}^{\infty}[\alpha+\beta+k(1-\beta)]| a_k| < \alpha-1,
\end{equation*}
for some $\alpha>1$ and $\beta\leq 0$, then $f\in\mathcal{MD}(\alpha,\beta)$.
\end{corollary}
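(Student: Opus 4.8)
The plan is simply to specialize Theorem \ref{t23} to the case $p=1$. Recall that for $p=1$ the class $\mathcal{M}_1(\alpha,\beta)$ coincides, by the identification recorded in the Introduction, with $\mathcal{MD}(\alpha,\beta)$; and a function $f\in\mathcal{A}=\mathcal{A}_1$ has precisely the form $f(z)=z+\sum_{k=2}^{\infty}a_kz^k$ required in Theorem \ref{t23} with $p=1$.

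First I would substitute $p=1$ into the hypothesis \eqref{1t23}. The coefficient $p\alpha+\beta+k(1-\beta)$ becomes $\alpha+\beta+k(1-\beta)$, the index of summation starts at $p+1=2$, and the right-hand side $p(\alpha-1)$ becomes $\alpha-1$. Hence the assumed inequality $\sum_{k=2}^{\infty}[\alpha+\beta+k(1-\beta)]|a_k|<\alpha-1$ is exactly condition \eqref{1t23} for $p=1$. Since the constraints $\alpha>1$ and $\beta\leq 0$ are the standing assumptions of Theorem \ref{t23}, all hypotheses of that theorem are met.

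Then, applying Theorem \ref{t23}, we conclude $f\in\mathcal{M}_1(\alpha,\beta)=\mathcal{MD}(\alpha,\beta)$, which is the assertion. There is no genuine obstacle here: the statement is a direct corollary obtained by a routine substitution, and the only point worth making explicit is the identification $\mathcal{M}_1(\alpha,\beta)=\mathcal{MD}(\alpha,\beta)$ so that the conclusion is phrased in the notation of the corollary.
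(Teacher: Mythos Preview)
Your proposal is correct and matches the paper's approach exactly: the paper derives Corollary~\ref{c22} simply by putting $p=1$ in Theorem~\ref{t23}, which is precisely the specialization you carry out, together with the identification $\mathcal{M}_1(\alpha,\beta)=\mathcal{MD}(\alpha,\beta)$.
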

At the end of this section, by Theorem \ref{t23} we consider an example for the class $\mathcal{M}_p(\alpha,\beta)$.
\begin{example}
  Define the function $f\in \mathcal{A}_p$ as follows
  \begin{equation*}
    f(z)=z^p+\sum_{k=p+1}^{\infty}\frac{p(p-1)(\alpha-1)e^{i\theta}}{[p\alpha+\beta+k(1-\beta)]k(k-1)}z^k,
  \end{equation*}
  where $\alpha>1$, $\beta\leq 0$ and $\theta\in\mathbb{R}$. Then the coefficients inequality \eqref{1t23}
yields
   \begin{eqnarray*}
    \sum_{k=p+1}^{\infty}[p\alpha+\beta+k(1-\beta)]| a_k| &=& p(p-1)(\alpha-1)\sum_{k=p+1}^{\infty}\frac{1}{k(k-1)}\\
     &=& (p-1)(\alpha-1)\\
     &<& p(\alpha-1),
   \end{eqnarray*}
  which shows $f\in\mathcal{M}_p(\alpha,\beta)$.
\end{example}

\section{Applications}\label{sec3}

In \cite{PD} Porwal and Dixit considered the function
$\mathcal{U}_{d,e,\delta}$ defined by the transformation
\begin{equation*}
\mathcal{U}_{d,e,\delta}(z)=2^d\Gamma\left(d+\frac{e+1}{2}\right)z^{-d/2}
w_{d,e,\delta}(z^{1/2}),
\end{equation*}
where $w_{d,e,\delta}$ is called the generalized
Bessel function of the first kind of order $d$ and has the familiar representation
\begin{equation*}
w(z)=w_{d,e,\delta}(z)=\sum_{k=0}^{\infty}\frac{(-1)^k\delta^k}
{k!\Gamma(d+k+\frac{e+1}{2})}\left(\frac{z}{2}\right)^{2k+d}\quad
z,d,e,\delta\in\mathbb{C}.
\end{equation*}
It is easy to see the function $\mathcal{U}_{d,e,\delta}$ has the following
representation
\begin{equation}\label{u1}
\mathcal{U}_{d,e,\delta}(z)=\sum_{k=0}^{\infty}\frac{(-1)^k(\delta/4)^k}
{\left(d+\frac{e+1}{2}\right)_k}\frac{z^k}{k!},
\end{equation}
where $d+\frac{e+1}{2}\neq 0,-1,-2,\ldots$ and  and $(x)_n$ is the
Pochhammer symbol defined by
\[ (x)_n:=\left\{%
\begin{array}{ll}
    1, & \hbox{$(n=0)$;} \\
    x(x+1)(x+2)\dots(x+n-1), & \hbox{$(n\in\mathbb{N})$.}
\end{array}%
\right.    \] We remark that the function
$\mathcal{U}_{d,e,\delta}(z)$ is analytic on $\mathbb{C}$.

The Gaussian hypergeometric function $F (a, b; c; z)$ given by
\begin{equation}\label{F}
F(a, b; c;
z)=\sum_{k=0}^{\infty}\frac{(a)_k(b)_k}{(c)_k(1)_k}z^k\quad
(z\in\Delta),
\end{equation}
where $a,b,c\in\mathbb{C}$ and $c\neq 0,-1,-2,\ldots$.
We note that $F(a, b; c; 1)$ converges for ${\rm Re}(a-b-c)>0$ and is related to the Gamma
function by
\begin{equation*}
  F(a, b; c; 1)=\frac{\Gamma(c-a-b)\Gamma(c)}{\Gamma(c-a)\Gamma(c-b)}\quad
{\rm Re}(c-a-b)>0.
\end{equation*}
Now by Using
\eqref{u1} and \ref{F} we introduce a new function
$\mathcal{I}_{c,d}^{a,b}(p,e,\delta)(z):\mathcal{A}_p\rightarrow
\mathcal{A}_p$ defined by
\begin{equation*}
\mathcal{I}_{c,d}^{a,b}(p,e,\delta)(z)=z^p(F (a,
b;c;z)*\mathcal{U}_{d,e,\delta}(z)),
\end{equation*}
where $"*"$ is the well--known Hadamard product. The function
$\mathcal{I}_{c,d}^{a,b}(p,e,\delta)(z)$ has the following
representation
\begin{equation*}
\mathcal{I}_{c,d}^{a,b}(p,e,\delta)(z)=z^p+\sum_{k=p+1}^{\infty}(-1)^{k-p}
\frac{(a)_{k-p}(b)_{k-p}(\delta/4)^{k-p}}{(c)_{k-p}\left(d+\frac{e+1}{2}\right)_{k-p}[(1)_{k-p}]^2}z^{k}.
\end{equation*}
We set $\mathcal{I}_{c,d}^{a,b}(1,e,\delta)(z)\equiv \mathcal{I}_{c,d}^{a,b}(e,\delta)(z)$. Applying Theorem \ref{t23} we have the following
Theorem:

\begin{theorem}\label{t31}
Let $a,b\in \mathbb{C}\backslash \{0\}$ and $e,d\in \mathbb{C}$. Also, assume that
$\delta, d+\frac{e+1}{2}>0$ and $c$ be a real number such that
$c>|a|+|b|+1$. Then
$\mathcal{I}_{c,d}^{a,b}(p,e,\delta)(z)\in\mathcal{M}_p(\alpha,\beta)$ if
\begin{equation}\label{e12}
\sum_{k=p+1}^{\infty}[p\alpha+\beta+k(1-\beta)]
\frac{|(a)_{k-p}(b)_{k-p}|(\delta/4)^{k-p}}{(c)_{k-p}\left(d+\frac{e+1}{2}\right)_{k-p}[(k-p)!]^2}
< p(\alpha-1).
\end{equation}
\end{theorem}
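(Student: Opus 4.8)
The plan is to reduce Theorem~\ref{t31} directly to Theorem~\ref{t23} by identifying the coefficients of the operator $\mathcal{I}_{c,d}^{a,b}(p,e,\delta)(z)$ and verifying that the hypothesis \eqref{1t23} is implied by \eqref{e12}. Writing $\mathcal{I}_{c,d}^{a,b}(p,e,\delta)(z)=z^p+\sum_{k=p+1}^{\infty}a_k z^k$, the series representation displayed above gives
\[
a_k=(-1)^{k-p}\frac{(a)_{k-p}(b)_{k-p}(\delta/4)^{k-p}}{(c)_{k-p}\left(d+\frac{e+1}{2}\right)_{k-p}[(k-p)!]^2},
\]
so that, since $(1)_{k-p}=(k-p)!$ and $\delta, d+\tfrac{e+1}{2}>0$ make the denominator positive,
\[
|a_k|=\frac{|(a)_{k-p}(b)_{k-p}|(\delta/4)^{k-p}}{(c)_{k-p}\left(d+\frac{e+1}{2}\right)_{k-p}[(k-p)!]^2}.
\]
Substituting this into \eqref{1t23} shows that the sum $\sum_{k=p+1}^\infty [p\alpha+\beta+k(1-\beta)]|a_k|$ is exactly the left-hand side of \eqref{e12}; hence \eqref{e12} is literally the inequality \eqref{1t23} for this $f$, and Theorem~\ref{t23} immediately yields $\mathcal{I}_{c,d}^{a,b}(p,e,\delta)(z)\in\mathcal{M}_p(\alpha,\beta)$.

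The only genuine issue to address is that Theorem~\ref{t23} is vacuous unless the series on the left of \eqref{e12} actually converges; one should check that the hypothesis $c>|a|+|b|+1$ guarantees this, which is presumably the reason that hypothesis is imposed. To see convergence, I would bound $|(a)_{k-p}(b)_{k-p}|\le (|a|)_{k-p}(|b|)_{k-p}$ and compare with the hypergeometric-type series $\sum_{m\ge 0}\frac{(|a|)_m(|b|)_m}{(c)_m (m!)^2}(\delta/4)^m$; using the elementary estimate $(d+\tfrac{e+1}{2})_m\ge (\text{const})\, m!$ for large $m$ (or simply $(d+\tfrac{e+1}{2})_m>0$ and bounded below), the ratio test together with the fact that $(x)_m/m!$ behaves like $m^{x-1}/\Gamma(x)$ reduces the question to the convergence of $\sum_m m^{|a|-1}m^{|b|-1}m^{-(c-1)} m^{-1}\cdot(\delta/4)^m$-type terms; when $\delta/4<1$ this converges for free, and the condition $c>|a|+|b|+1$ is what secures convergence even at the boundary of the disk where $F(|a|,|b|;c;1)$ is needed (recall $F(a,b;c;1)$ converges precisely when $\mathrm{Re}(c-a-b)>0$, and the extra factor $k-p$ from the term $k(1-\beta)$ costs one more unit, explaining the ``$+1$'').

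Thus the steps, in order, are: (i) record the explicit coefficients $a_k$ of the operator and compute $|a_k|$, using $(1)_{k-p}=(k-p)!$ and positivity of $\delta$ and $d+\tfrac{e+1}{2}$; (ii) observe that $\sum_{k=p+1}^\infty[p\alpha+\beta+k(1-\beta)]|a_k|$ coincides with the left side of \eqref{e12}; (iii) verify, via the comparison/ratio-test argument above, that the condition $c>|a|+|b|+1$ makes this series finite, so that \eqref{e12} is a meaningful hypothesis equivalent to \eqref{1t23}; (iv) invoke Theorem~\ref{t23} to conclude. I expect step (iii)—the convergence bookkeeping, in particular pinning down why the constant in $c>|a|+|b|+1$ is exactly $1$—to be the main (though still routine) obstacle; everything else is a direct substitution.
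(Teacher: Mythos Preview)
Your approach is essentially identical to the paper's: identify the coefficients $a_k$ of $\mathcal{I}_{c,d}^{a,b}(p,e,\delta)(z)$, compute $|a_k|$ using the positivity of $\delta$, $c$, and $d+\tfrac{e+1}{2}$, observe that \eqref{1t23} becomes exactly \eqref{e12}, and invoke Theorem~\ref{t23}. The paper's proof is precisely this substitution in two lines; your additional step~(iii) on convergence is extra care the authors omit entirely (and indeed, since all summands in \eqref{e12} are nonnegative, the hypothesis \eqref{e12} itself already forces the series to converge, so convergence need not be verified separately before applying Theorem~\ref{t23}).
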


\begin{proof}
Let $f(z)=z^p+\sum_{k=p+1}^{\infty}a_kz^k\in \mathcal{A}_p$. By virtue of
Theorem \ref{t23}, it suffices to show that
\begin{equation*}
\sum_{k=p+1}^{\infty}[p\alpha+\beta+k(1-\beta)]
\left|\frac{(-1)^{k-p}(a)_{k-p}(b)_{k-p}(\delta/4)^{k-p}}{(c)_{k-p}\left(d+\frac{e+1}{2}\right)_{k-p}[(1)_{k-p}]^2}\right|
< p(\alpha-1).
\end{equation*}
Some reductions give \eqref{e12}
\end{proof}
Setting $p=1$ in Theorem \ref{t31}, we have:
\begin{corollary}\label{c31}
If $a,b\in \mathbb{C}\backslash \{0\}$, $e,d\in \mathbb{C}$, $\delta, d+\frac{e+1}{2}>0$ and $c$ be a real number such that
$c>|a|+|b|+1$, then a sufficient condition for
$\mathcal{I}_{c,d}^{a,b}(e,\delta)(z)\in\mathcal{MD}(\alpha,\beta)$ is that
\begin{equation*}
\sum_{k=2}^{\infty}[\alpha+\beta+k(1-\beta)]
\frac{|(a)_{k-1}(b)_{k-1}|(\delta/4)^{k-1}}{(c)_{k-1}\left(d+\frac{e+1}{2}\right)_{k-1}[(k-1)!]^2}
< \alpha-1.
\end{equation*}
\end{corollary}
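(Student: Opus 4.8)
The plan is to obtain the corollary as the specialization $p=1$ of Theorem~\ref{t31}, and to make the reduction transparent I would phrase it directly in terms of Corollary~\ref{c22}. The starting observation is purely notational: by the conventions fixed in Section~\ref{sec3}, $\mathcal{M}_1(\alpha,\beta)=\mathcal{MD}(\alpha,\beta)$ and $\mathcal{I}_{c,d}^{a,b}(1,e,\delta)(z)\equiv\mathcal{I}_{c,d}^{a,b}(e,\delta)(z)$. Putting $p=1$ in the series representation of $\mathcal{I}_{c,d}^{a,b}(p,e,\delta)(z)$ then exhibits $f:=\mathcal{I}_{c,d}^{a,b}(e,\delta)$ as an element of $\mathcal{A}$ whose Taylor coefficients are $a_1=1$ and, for $k\ge 2$,
\[
a_k=(-1)^{k-1}\frac{(a)_{k-1}(b)_{k-1}(\delta/4)^{k-1}}{(c)_{k-1}\left(d+\frac{e+1}{2}\right)_{k-1}[(1)_{k-1}]^{2}}.
\]

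Next I would simplify $|a_k|$ using the sign hypotheses. Since $a,b\neq 0$ we have $c>|a|+|b|+1>0$, so $(c)_{k-1}$ is a product of positive numbers and is positive; similarly $d+\frac{e+1}{2}>0$ gives $\left(d+\frac{e+1}{2}\right)_{k-1}>0$, $\delta>0$ gives $(\delta/4)^{k-1}>0$, and $(1)_{k-1}=(k-1)!$ so that $[(1)_{k-1}]^{2}=[(k-1)!]^{2}>0$. Hence the factor $(-1)^{k-1}$ and all denominator factors pass through the modulus untouched, and
\[
|a_k|=\frac{|(a)_{k-1}(b)_{k-1}|\,(\delta/4)^{k-1}}{(c)_{k-1}\left(d+\frac{e+1}{2}\right)_{k-1}[(k-1)!]^{2}}\qquad(k\ge 2).
\]

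Finally, I would feed this into Corollary~\ref{c22}, which states that $f\in\mathcal{A}$ lies in $\mathcal{MD}(\alpha,\beta)$ as soon as $\sum_{k=2}^{\infty}[\alpha+\beta+k(1-\beta)]|a_k|<\alpha-1$. Substituting the expression for $|a_k|$ just obtained turns the left-hand side into exactly the series displayed in the statement of the corollary, so that series being less than $\alpha-1$ is indeed a sufficient condition for $\mathcal{I}_{c,d}^{a,b}(e,\delta)(z)\in\mathcal{MD}(\alpha,\beta)$. I do not expect any genuine obstacle here: the only step calling for a little care is the bookkeeping with the absolute values in the middle paragraph --- that is, checking that the positivity assumptions on $\delta$, $c$ and $d+\frac{e+1}{2}$ leave only $(a)_{k-1}(b)_{k-1}$ inside the modulus --- and the role of the hypothesis $c>|a|+|b|+1$ in this argument is merely to keep $(c)_{k-1}$ positive (it is inherited from Theorem~\ref{t31}, where it additionally secures convergence of the hypergeometric part). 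Everything else is the verbatim $p=1$ instance of Theorem~\ref{t31}.
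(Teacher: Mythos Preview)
Your proposal is correct and follows essentially the same approach as the paper, which simply records that the corollary is the case $p=1$ of Theorem~\ref{t31}. Your extra bookkeeping on the signs of $(c)_{k-1}$, $\left(d+\frac{e+1}{2}\right)_{k-1}$ and $(\delta/4)^{k-1}$ just makes explicit what the paper leaves implicit.
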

If we take $\beta=0$ in Corollary \ref{c31}, we have the following result:
\begin{corollary}\label{c32}
If $a,b\in \mathbb{C}\backslash \{0\}$, $e,d\in \mathbb{C}$, $\delta, d+\frac{e+1}{2}>0$ and $c$ be a real number such that
$c>|a|+|b|+1$, then a sufficient condition for
$\mathcal{I}_{c,d}^{a,b}(e,\delta)(z)\in\mathcal{M}(\alpha)$ is that
\begin{equation*}
\sum_{k=2}^{\infty}[\alpha+k]
\frac{|(a)_{k-1}(b)_{k-1}|(\delta/4)^{k-1}}{(c)_{k-1}\left(d+\frac{e+1}{2}\right)_{k-1}[(k-1)!]^2}
< \alpha-1.
\end{equation*}
The class $\mathcal{M}(\alpha)$ was considered by Uralegaddi et al.
\cite{UGS}, Nishiwaki and Owa \cite{NO}, and Owa and Nishiwaki
\cite{OJ}.
\end{corollary}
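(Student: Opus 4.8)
The plan is to derive Corollary \ref{c32} as the degenerate ($\beta=0$) case of Corollary \ref{c31}, so the argument is essentially a matter of specialisation. First I would record the class identification $\mathcal{MD}(\alpha,0)=\mathcal{M}(\alpha)$: since $\mathcal{MD}(\alpha,\beta)=\mathcal{M}_1(\alpha,\beta)$ by definition, setting $p=1$ and $\beta=0$ in the defining inequality \eqref{eq1} collapses it to ${\rm Re}\left(zf'(z)/f(z)\right)<\alpha$, which is exactly the half--plane condition \eqref{c} at $p=1$ and the defining property of the class $\mathcal{M}(\alpha)$ used by Uralegaddi et al., Nishiwaki--Owa, and Owa--Nishiwaki. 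Next I would substitute $\beta=0$ into the coefficient weight of Corollary \ref{c31}: the factor $\alpha+\beta+k(1-\beta)$ becomes $\alpha+k$, so the summation hypothesis of Corollary \ref{c31} turns, verbatim, into the inequality displayed in Corollary \ref{c32}. Finally, since $\beta=0$ satisfies the standing assumption $\beta\leq 0$ under which Corollary \ref{c31} was proved, that corollary applies and yields $\mathcal{I}_{c,d}^{a,b}(e,\delta)(z)\in\mathcal{MD}(\alpha,0)=\mathcal{M}(\alpha)$.

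If one prefers a self--contained route, the same conclusion follows directly from Corollary \ref{c22} (the $p=1$ case of Theorem \ref{t23}) with $\beta=0$. Writing $\mathcal{I}_{c,d}^{a,b}(e,\delta)(z)=z+\sum_{k=2}^{\infty}a_kz^k$, we have $a_k=(-1)^{k-1}(a)_{k-1}(b)_{k-1}(\delta/4)^{k-1}/\bigl((c)_{k-1}(d+\frac{e+1}{2})_{k-1}[(1)_{k-1}]^2\bigr)$; using $(1)_{k-1}=(k-1)!$ and the positivity of $(\delta/4)^{k-1}$ (since $\delta>0$), of $(c)_{k-1}$ (since $c$ is real and $c>|a|+|b|+1>0$) and of $(d+\frac{e+1}{2})_{k-1}$ (since $d+\frac{e+1}{2}>0$), one gets $|a_k|=|(a)_{k-1}(b)_{k-1}|(\delta/4)^{k-1}/\bigl((c)_{k-1}(d+\frac{e+1}{2})_{k-1}[(k-1)!]^2\bigr)$. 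Hence the hypothesis of Corollary \ref{c32} reads precisely $\sum_{k=2}^{\infty}[\alpha+k]|a_k|<\alpha-1$, which is the $\beta=0$ instance of the sufficient condition in Corollary \ref{c22}; in particular this forces $\sum_{k\ge 2}|a_k|<\infty$, so $\mathcal{I}_{c,d}^{a,b}(e,\delta)\in\mathcal{A}$, and Corollary \ref{c22} gives membership in $\mathcal{MD}(\alpha,0)=\mathcal{M}(\alpha)$.

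I do not expect any real obstacle here; the statement is a routine specialisation. The only two points deserving a line of care are: (i) that $\beta=0$ is an admissible parameter, which is built into the definition of $\mathcal{M}_p(\alpha,\beta)$ and hence into Corollary \ref{c31}; and (ii) that the class $\mathcal{M}(\alpha)$ appearing in the literature really is $\mathcal{M}_1(\alpha,0)$, which is immediate upon comparing \eqref{c} (with $p=1$) to its definition ${\rm Re}\left(zf'(z)/f(z)\right)<\alpha$.
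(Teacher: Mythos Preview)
Your proposal is correct and matches the paper's approach exactly: the paper obtains Corollary~\ref{c32} simply by setting $\beta=0$ in Corollary~\ref{c31}, and you carry out precisely this specialisation, additionally spelling out the class identification $\mathcal{MD}(\alpha,0)=\mathcal{M}(\alpha)$ and the reduction of the coefficient weight. Your alternative self--contained route via Corollary~\ref{c22} is also valid but goes beyond what the paper does.
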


\end{document}